\newtheorem{theorem}{Theorem}
\newtheorem{definition}[theorem]{Definition}
\newtheorem{remark}{Remark}
\newtheorem{corollary}[theorem]{Corollary}
\newtheorem{example}{Example}
\begin{document}

\begin{center}
\Large{On the regularity  of generic Hausdorff-type transformations}
\end{center}

\

\centerline{A. R. Mirotin}

\

\centerline{amirotin@yandex.ru}

\

\

Abstract. The general notion of a Hausdorff-type operator with a kernel depending on an external variable is introduced and  generalizations and analogs of  classical results  on the regularity of various  summation methods are proved for the case of such operators.

\
\ 

\section{Introduction and preliminaries}

As is well known,  results on the regularity of transformations of the form
\begin{equation}\label{tm}
t(m)=\sum_{n=0}^\infty c_{m,n}f_n\ \ (m\in \mathbb{Z}_+)
\end{equation}
form an important part of the general theory of summation methods (e.g. \cite[Chapter III]{H}\footnote{G.H. Hardy: ``The most important  
transformations are regular''  \cite[p. 43]{H}.}). The classical theorem, due to Toeplitz,  Schur, and Silverman states necessary and sufficient conditions for the regularity of such transformations (e.g., \cite[Chapter III, \S 3.3, Theorem 2]{H},  \cite{Wilansky}). For generalizations and analogs of this result see e.g.,  \cite[Chapter III, \S 3.5, Theorems 5, 6]{H}.

On the other hand, Hausdorff  operator on the semi-axis  was introduced by  Rogosinski and independently by Garabedian in the form
\begin{equation}\label{hardy}
(H_\mu f)(x)=\int_0^1f(ux)d\mu(u),\ x\in (0,\infty),
\end{equation}
where $\mu$ stands for a finite measure on $[0,1]$ \cite{Rogosinski}, \cite{FuchsRogosinski}.
This ``continuous  Hausdorff  method of summation`` is  a natural  analog of the classical Hausdorff summation method  (e.g., \cite[Chapter XI, p.  276]{H}).  The  Abel, the Ces\'aro,  and  the H$\ddot {\rm o}$lder means of a function $f$ on $(0,\infty)$ of all real and positive orders have this form. Typical means are obtained also by restricting to the choice of $f$ in \eqref{hardy} to step functions. This idea was used in  \cite{FuchsRogosinski} in order to apply continuous  Hausdorff  method of summation to the study of summation by these means.

Rogosinski   proved the following result on the regularity of the transformation  $H_\mu$  (e.g., \cite[Chapter XI, Theorem 217]{H}).

\begin{theorem}\label{th:1}  In order that the transformation \eqref{hardy} should be 
regular, i.e. that $f(x) \to l$ should imply $(H_\mu f)(x)\to l$ ($x\to\infty$), it is necessary and 
sufficient that $\mu([0,1])=1$ and $\mu(\{0\})=0$. 
\end{theorem}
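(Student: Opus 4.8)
The plan is to prove the two implications separately, in each case testing $H_\mu$ against a well-chosen $f$ and using only that $\mu$ is finite, so that $\|\mu\|:=|\mu|([0,1])<\infty$; as in the classical case, regularity is understood for $f$ ranging over the bounded measurable functions on $(0,\infty)$, on which $H_\mu$ is automatically defined.

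\emph{Sufficiency.} Assuming $\mu([0,1])=1$ and $\mu(\{0\})=0$, I would first use linearity of $H_\mu$ together with $(H_\mu\,l)(x)=l\,\mu([0,1])=l$ for the constant function $l$ to replace $f$ by $f-l$, reducing matters to showing that $f(x)\to 0$ implies $(H_\mu f)(x)\to 0$. Put $M=\sup|f|$, fix $\varepsilon>0$, and choose $X$ with $|f(y)|<\varepsilon$ for $y>X$. For $x>X$ split the defining integral at $u=X/x$:
\[
(H_\mu f)(x)=\int_{[0,\,X/x]}f(ux)\,d\mu(u)+\int_{(X/x,\,1]}f(ux)\,d\mu(u).
\]
In the first integral $|f(ux)|\le M$, so it is at most $M\,|\mu|([0,X/x])$ in modulus; in the second $|f(ux)|<\varepsilon$, so it is at most $\varepsilon\|\mu\|$. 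As $x\to\infty$ we have $X/x\downarrow 0$, hence $|\mu|([0,X/x])\to|\mu|(\{0\})=|\mu(\{0\})|=0$ by continuity from above of the finite measure $|\mu|$. Thus $\limsup_{x\to\infty}|(H_\mu f)(x)|\le\varepsilon\|\mu\|$, and letting $\varepsilon\to 0$ gives $(H_\mu f)(x)\to 0$.

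\emph{Necessity.} Assuming $H_\mu$ is regular, testing with $f\equiv 1$ (which tends to $1$) and using $(H_\mu 1)(x)=\mu([0,1])$ for all $x$ yields $\mu([0,1])=1$. For the second condition I would, for each $a>0$, take $f_a=\chi_{(a,\infty)}$, the indicator of $(a,\infty)$, so that $f_a(x)\to 1$ as $x\to\infty$; then for $x>a$,
\[
(H_\mu f_a)(x)=\int_0^1\chi_{(a,\infty)}(ux)\,d\mu(u)=\mu\big(\{u\in[0,1]:u>a/x\}\big)=\mu\big((a/x,\,1]\big),
\]
and as $x\to\infty$ the sets $(a/x,1]$ increase to $(0,1]$, so by continuity from below $(H_\mu f_a)(x)\to\mu((0,1])=\mu([0,1])-\mu(\{0\})=1-\mu(\{0\})$. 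Regularity forces this limit to equal $1$, whence $\mu(\{0\})=0$. (If one insists on continuous test functions, replace $\chi_{(a,\infty)}$ by a continuous $f_a$ with $0\le f_a\le1$, $f_a\equiv0$ on $(0,a]$ and $f_a\equiv1$ on $[2a,\infty)$, and reach the same limit by dominated convergence, since $f_a(ux)\to1$ for every fixed $u>0$ while $f_a(0)=0$.)

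The argument is short, and the only points that need care are bookkeeping: fixing the admissible class of $f$ so that $H_\mu f$ is defined and the splitting is legitimate, and — if $\mu$ is allowed to be a signed or complex measure of finite variation — systematically passing to $|\mu|$ in the estimates and to $\mu=\mu^+-\mu^-$ in the monotone-limit steps. There is no substantive obstacle: the heart of the matter is the split at $u=X/x$ for sufficiency and the observation that, after scaling, $f_a(ux)\to1$ for every fixed $u>0$ for necessity.
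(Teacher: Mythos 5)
Your proof is correct, and it is genuinely different from what the paper does with this statement. The paper never proves Theorem \ref{th:1} directly: it quotes it from Hardy/Rogosinski and then, in Example \ref{ex:hardy} together with Theorem \ref{th:2}, recovers it as a special case of the general filter-based regularity theorem --- sufficiency there comes from splitting $\Omega$ into a set of finite measure and its complement and invoking uniform integrability and the Lebesgue--Vitali theorem along a countable base of the filter $x\to+\infty$, with $\mu([0,1])=1$ appearing as the normalization condition (iv) and $\mu(\{0\})=0$ as the condition that the dilations $A(u)(x)=ux$ agree with the filter; the necessity of $\mu(\{0\})=0$ is not derived from the general machinery at all (Remark \ref{on(agree)} uses it as known). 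Your argument is instead elementary and self-contained: for sufficiency you reduce to $l=0$ via $(H_\mu l)(x)=l\,\mu([0,1])$ and split the integral at $u=X/x$, using continuity from above of $|\mu|$ to kill the piece near $u=0$ (this is exactly where $|\mu|(\{0\})=|\mu(\{0\})|=0$ enters, replacing the paper's ``agreement with the filter''); for necessity you test with $f\equiv 1$ and with $\chi_{(a,\infty)}$ (or its continuous regularization), obtaining $\lim_x (H_\mu f_a)(x)=\mu((0,1])=1-\mu(\{0\})$, which the general theorem cannot give since its ``only if'' direction only yields the normalization condition. What your route buys is a short direct proof, valid for signed (or complex) measures of finite variation via systematic use of $|\mu|$, and a genuine proof of the necessity of $\mu(\{0\})=0$; what the paper's route buys is generality, since the same scheme covers kernels $\Phi(u,x)$, vector-valued $f$, arbitrary families $A(u)$ and filters with countable base. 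The only bookkeeping point worth making explicit in your write-up is the one you already flag: the admissible class of $f$ (bounded Borel measurable, so that $u\mapsto f(ux)$ is $\mu$-measurable and the split of the integral is legitimate), which matches the paper's Definition \ref{df:regular}.
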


This is an  analog of the regularity result for  classical Hausdorff summability.

The aim of this note is to put the results mentioned above to a   general context.
 We will work  in the context of the  generalized Hausdorff-type operators   with kernels depending on an external variable. In general, these operators act between two different sets. To the best of the author's knowledge, Hausdorff operators in such a generality have not been considered before. It turns out that these very advanced Hausdorff-type operators continue to preserve regularity under some natural assumptions.

In resent two  decades a lot of different notions of a Hausdorff operator have been suggested (see \cite{BM2, CFW,    KMHB, KSZhu, LL, Ls, JMAA, MCAOT, MCAOT2, Lie,  JOTH, Nachr, SG, S}, the survey article \cite{ LM:survey}, and the bibliography therein).
The following definition  covers all known classes of operators bearing the name ``Hausdorff'' and many classical and new types of operators and transformations such as  transformations arise in  classical summation methods,  classical and  discrete Hilbert transforms and there generalizations, integral Hankel operators, orbital integrals,  convolution operators on groups, Hadamard-Bergmann convolutions etc. (e.g., \cite{ LM:survey},  \cite{KAM}). As was mentioned above, the characteristic features of this definition are the consideration of kernels depending on an external variable and the action between two different sets.

\begin{definition}\label{general1}  Let  $S$ and  $S'$ be two sets, $(\Omega,\mu)$ denotes some measure space, and $A(u): S\to S'$ ($u\in  \Omega $) be some family of mappings\footnote{$A(u)$ do not assumed to be invertible.}.  Let  $\Phi(u,x)$ be a given  function on $\Omega\times S$ which is $\mu$-measurable for every $x\in S$, and $V$ some Banach space.
 {\it A Hausdorff-type operator} acts on a  functions $f: S' \to V$  by the rule
\begin{equation}\label{general}
(\mathcal{H}_{\Phi,A,\mu}f)(x) =\int_{\Omega} \Phi(u,x)f(A(u)(x))d\mu(u), \ x\in S,
\end{equation}
provided the integral  converges in a suitable  sense.
\end{definition}

\begin{remark}\label{rem:1}
By the well known criterion of Bochner integrability the integral  in \eqref{general} exists in the sense of Bochner (with respect to the Banach space $V$) for every $x\in S$ if for  fixed $x$ the function $\Phi(\cdot, x)\in L^1(\mu)$, $V$-valued function $f$ is bounded, and  the map $u\mapsto f(A(u)(x))$, $\Omega\to V$ is $\mu$-measurable.
\end{remark}

\begin{remark}\label{rem:10}
If a kernel $\Phi(u,x)=\Phi(u)$ does not depend of the external variable $x$ we call the corresponding Hausdorff operator ``Hausdorff operator  with a one-variable kernel''.  Such operators were at the first time introduced in  \cite{KL} under the name  ``a broad Hausdorff operator''.
\end{remark}

Obviously,  the operator \eqref{hardy} is a very special case of \eqref{general}. 

An interesting special case of a Hausdorff-type operator in a sense of Definition \ref{general1} appears if we take $\Omega =\mathbb{Z}_+$. Denoting $c_n(x):=\Phi(n,x)$, $A_n(x):=A(n,x)$, and $\mu_n:=\mu(\{n\})$ ($n\in \mathbb{Z}_+$) we obtain a {\it discrete  Hausdorff-type operator} in a form

\begin{equation}\label{discrete}
(\mathcal{H}_{c,A,\mu}f)(x) =\sum_{n=0}^\infty c_n(x)\mu_nf(A_n(x)),
\end{equation}
provided the series  converges for   $x\in S$.

If  $A_n(x)\equiv s_n\in S'$ we obtain an operator of the form \eqref{tm}.

\section{The regularity property of Hausdorff-type operators}\label{regularity}\setcounter{equation}{0}

The next theorem  gives some  generic  scheme for   generalizations and analogs of  classical results  mentioned above. As regards filters we refer to \cite{GenTopI}. 

We need the following definitions.

\begin{definition}\label{df:regular}
 Let $\frak{F}$ and  $\frak{F}'$ be  filters  on sets $S$ and $S'$ respectively, $A(u):S\to S'$  for all $u\in \Omega$, and $V$ some Banach space.
 We say that  a transformation $\mathcal{H}_{\Phi,A,\mu}$  is
regular with respect to filters  $\frak{F}$ and $\frak{F}'$, and a Banach space $V$ if  for every bounded   function $f:S'\to V$ such that the  mapping  $u\mapsto f(A(u)(x))$ is $\mu$-measurable for each $x \in S$ the equality  $\lim_{x',\frak{F}'} f(x') = l$  in $V$  implies $\lim_{x,\frak{F}}(\mathcal{H}_{\Phi,A,\mu} f)(x) = l$ in $V$.
\end{definition}

\begin{definition}\label{df:filter}
 Let $\frak{F}$ and  $\frak{F}'$ be  filters  on sets $S$ and $S'$ respectively. We say that  a family  of mappings $A(u):S\to S'$ ($u\in \Omega$) 
\textit{agrees with  filters } $\frak{F}$ and $\frak{F}'$ if  $A(u)(\frak{F})$ is a base of  $\frak{F}'$ for $\mu$-a.e. $u\in \Omega$.

If $S=S'$, $\frak{F}=\frak{F}'$, and   $A(u)(\frak{F})$ is a base of  $\frak{F}$ for $\mu$-a.e. $u\in \Omega$ we say that  this family agrees with  $\frak{F}$. 
\end{definition}

\begin{theorem}\label{th:2}
Suppose that the conditions of Definition \ref{general1} are fulfilled,   a filter  $\frak{F}$ on $S$ has a countable base, and  a family of mappings $A(u):S\to S'$ ($u\in \Omega$) agrees with  filters  $\frak{F}$ and $\frak{F}'$. Let the kernel $\Phi$ satisfies the following conditions:

(i) 
$$
a) \sup_{x\in S}\int_{\Omega} |\Phi(u,x)|d\mu(u)<\infty; \  \  b) \forall u\in \Omega \ \sup_{x\in S}|\Phi(u,x)|<\infty;
$$

(ii)  for every $\varepsilon>0$, there are such $K_\varepsilon\subseteq \Omega$ with $\mu(K_\varepsilon)<\infty$ and $F_\varepsilon\in  \frak{F}$  that 
$$
\sup_{x\in F_\varepsilon}\int_{\Omega\setminus K_\varepsilon} |\Phi(u,x)|d\mu(u)<\varepsilon;
$$

(iii) for every $\varepsilon>0$, there is such $\delta>0$ that for all $E\subset \Omega$ with $\mu(E)<\delta$
$$
\sup_{x\in S}\int_{E} |\Phi(u,x)|d\mu(u)<\varepsilon.
$$

Then the transformation $\mathcal{H}_{\Phi,A,\mu}$  is
regular  with respect to  $\frak{F}$   and $\frak{F}'$  and every Banach space $V$ if and only if  the condition

(iv)
\begin{equation*}
\lim_{x,\frak{F}}\int_{\Omega} \Phi(u,x)d\mu(u)=1
\end{equation*}
 holds.
\end{theorem}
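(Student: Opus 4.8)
The plan is to prove necessity by a one‑line test with a constant function, and sufficiency by adapting the Toeplitz–Schur–Silverman argument, with the Vitali convergence theorem playing the role of dominated convergence.

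\emph{Necessity.} Suppose $\mathcal{H}_{\Phi,A,\mu}$ is regular and apply Definition \ref{df:regular} to the constant function $f\equiv 1$ with $V=\mathbb{R}$. The map $u\mapsto f(A(u)(x))\equiv 1$ is $\mu$‑measurable and $\lim_{x',\frak{F}'}f(x')=1$, so $\lim_{x,\frak{F}}\int_{\Omega}\Phi(u,x)\,d\mu(u)=\lim_{x,\frak{F}}(\mathcal{H}_{\Phi,A,\mu}f)(x)=1$, which is precisely (iv); conditions (i)--(iii) are not used here.

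\emph{Sufficiency.} Assume (i)--(iv). First reduce to $l=0$: since $\Phi(\cdot,x)\in L^{1}(\mu)$ by (i)a), one has $(\mathcal{H}_{\Phi,A,\mu}(f-l))(x)=(\mathcal{H}_{\Phi,A,\mu}f)(x)-\big(\int_{\Omega}\Phi(u,x)\,d\mu(u)\big)l$, and by (iv) the subtracted term tends to $l$ along $\frak{F}$; also $f-l$ is bounded and keeps the measurability property, so it suffices to show that $\lim_{x',\frak{F}'}f(x')=0$ forces $\lim_{x,\frak{F}}(\mathcal{H}_{\Phi,A,\mu}f)(x)=0$. Put $M:=\sup_{x'\in S'}\|f(x')\|_{V}$, the case $M=0$ being trivial. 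As $\frak{F}$ has a countable base, fix a decreasing base $B_{1}\supseteq B_{2}\supseteq\cdots$; then, for a $V$‑valued $\phi$, $\lim_{x,\frak{F}}\phi(x)=0$ holds iff $\phi(x_{n})\to 0$ for every sequence with $x_{n}\in B_{n}$. Fix such a sequence $(x_{n})$. The pointwise key is: because the family $A(u)$ agrees with $\frak{F},\frak{F}'$, for $\mu$‑a.e.\ $u$ the sets $A(u)(F)$, $F\in\frak{F}$, form a base of $\frak{F}'$, so the sequence $A(u)(x_{n})$ converges along $\frak{F}'$ and hence $\|f(A(u)(x_{n}))\|_{V}\to 0$; combined with $\sup_{n}|\Phi(u,x_{n})|\le\sup_{x\in S}|\Phi(u,x)|<\infty$ from (i)b), this gives $h_{n}(u):=|\Phi(u,x_{n})|\,\|f(A(u)(x_{n}))\|_{V}\to 0$ for $\mu$‑a.e.\ $u$.

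Now fix $\varepsilon>0$ and, by (ii), choose $K=K_{\varepsilon}$ with $\mu(K)<\infty$ and $F_{\varepsilon}\in\frak{F}$ so that $\int_{\Omega\setminus K}|\Phi(u,x)|\,d\mu(u)<\varepsilon$ for all $x\in F_{\varepsilon}$. For $n$ large, $x_{n}\in F_{\varepsilon}$, so $\int_{\Omega\setminus K}h_{n}\,d\mu\le M\varepsilon$. On the finite‑measure set $K$, conditions (i)a) and (iii) mean that $\{|\Phi(\cdot,x)|:x\in S\}$ is bounded in $L^{1}(K)$ and uniformly absolutely continuous, hence uniformly integrable over $K$; therefore $\{h_{n}\}$ is uniformly integrable over $K$, being dominated there by $M|\Phi(\cdot,x_{n})|$. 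With $h_{n}\to 0$ a.e.\ on $K$, the Vitali convergence theorem yields $\int_{K}h_{n}\,d\mu\to 0$. Since $\|(\mathcal{H}_{\Phi,A,\mu}f)(x_{n})\|_{V}\le\int_{K}h_{n}\,d\mu+\int_{\Omega\setminus K}h_{n}\,d\mu$ (the Bochner integral being well defined by (i)a) and boundedness of $f$), we get $\limsup_{n}\|(\mathcal{H}_{\Phi,A,\mu}f)(x_{n})\|_{V}\le M\varepsilon$; as $\varepsilon$ is arbitrary, $(\mathcal{H}_{\Phi,A,\mu}f)(x_{n})\to 0$, and the reduction to sequences finishes the proof.

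\emph{Main obstacle.} The crux is that the kernels $\Phi(\cdot,x)$ admit in general no common integrable majorant, so dominated convergence is unavailable; conditions (i)a), (ii), (iii) form exactly a tightness/uniform‑integrability package tailored so that Vitali's theorem applies on the finite‑measure piece $K_{\varepsilon}$ while the complement is absorbed into $M\varepsilon$. The other delicate step is checking that agreement with filters really does force $A(u)(x_{n})$ to converge along $\frak{F}'$ for $\mu$‑a.e.\ $u$ — which is what legitimizes the a.e.\ statement $h_{n}\to 0$ — together with the elementary but essential fact that, for a filter with a countable base, convergence along the filter reduces to convergence of the relevant sequences.
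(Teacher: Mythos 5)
Your proof is correct and follows essentially the same route as the paper: necessity via a constant test function, and sufficiency by splitting off the $\Omega\setminus K_\varepsilon$ part using (ii) and handling the finite-measure part with uniform integrability (from (i) and (iii)) plus the Vitali convergence theorem, the pointwise limit coming from the filter-agreement of the maps $A(u)$. Your only departures are presentational — reducing to $l=0$ instead of subtracting $l$ inside the integral, and making explicit the reduction of filter convergence to sequences via a decreasing countable base (which the paper invokes only parenthetically) — and you correctly note the ``$\mu$-a.e.\ $u$'' caveat that the paper glosses over.
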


\begin{proof} Let the conditions (i) --- (iv) are fulfilled. First note that by Remark \ref{rem:1} integral in \eqref{general} exists in the sense of Bochner for every $x\in S$.

Now let  $\lim_{x',\frak{F}'} f(x') = l$ in norm of some Banach space $V$. Then  for each $u\in \Omega$,
\begin{equation}\label{lim}
\lim_{x,\frak{F}} f(A(u)(x)) = l
\end{equation}
in norm of $V$, as well.  Indeed,   for every  $\varepsilon>0$, there exists such $M'_\varepsilon \in\frak{F}'$ that $\|f(y)-l\|<\varepsilon$ for all $y\in M'_\varepsilon$. Let  $N_\varepsilon \in\frak{F}$ be such that  $M'_\varepsilon\supseteq A(u)(N_\varepsilon)$. Then  $\|f(A(u)(x))-l\|<\varepsilon$ for all $x\in N_\varepsilon$  and \eqref{lim} follows. 
 
 Further, since
 \begin{align*}
 (\mathcal{H}_{\Phi,A,\mu}f)(x)-l &=\int_{\Omega} \Phi(u,x)(f(A(u)(x))-l)d\mu(u)\\
 &+l\left(\int_{\Omega} \Phi(u,x)d\mu(u)-1\right),
 \end{align*}
we have
 \begin{align}\label{est:1}
\|(\mathcal{H}_{\Phi,A,\mu}f)(x)-l\| &\le\int_{\Omega} |\Phi(u,x)|\|f(A(u)(x))-l\|d\mu(u)\\ \nonumber
 &+\|l\|\left|\int_{\Omega} \Phi(u,x)d\mu(u)-1\right|\\\nonumber
 &=I_1(x)+I_2(x).\nonumber
 \end{align}

Let $\varepsilon>0$. In view of (iv) there is such $M_\varepsilon \in\frak{F}$ that $I_2(x)<\varepsilon$ for all $x\in M_\varepsilon$.

Next, for  $K\subseteq \Omega$ with $\mu(K)<\infty$, one has
 \begin{align}\label{est:2}
I_1(x) &=\int_{\Omega\setminus K} +\int_{K}|\Phi(u,x)|\|f(A(u)(x))-l\|d\mu(u)\\\nonumber
&=I_3(x)+I_4(x).\nonumber
 \end{align}
If $\|f(y)\|\le C$ for all $y\in S'$ then 
 \begin{align}\label{est:3}
|\Phi(u,x)|\|f(A(u)(x))-l\| \le (C+\|l\|)|\Phi(u,x)|.
 \end{align}

By (ii), one can choose  such $K=K_\varepsilon $ of finite $\mu$-measure and $F_\varepsilon\in \frak{F}$ that  $I_3(x)<\varepsilon$ for all $x\in F_\varepsilon$.

Now we claim that 
 \begin{align}\label{I_4to0}
\lim_{x,\frak{F}} I_4(x) =0
 \end{align}
 by the Lebesgue-Vitali Theorem (e.g., \cite[Theorem 4.5.4]{Bogachev}). For the proof of \eqref{I_4to0} note  that the estimate \eqref{est:3} and
 the condition (iii) imply that the family of functions 
  \begin{align}\label{family}
  (|\Phi(\cdot,x)|\|f(A(\cdot)(x))-l\|)_{x\in S}
  \end{align}
  has uniformly absolutely continuous integrals in the sense of  \cite[Definition 4.5.2]{Bogachev}. Moreover, the condition (i)  implies that this family is bounded in $L^1(\mu)$. Then, by \cite[Proposition  4.5.3]{Bogachev}, the family \eqref{family} is uniformly integrable and \eqref{I_4to0} follows  in view of \eqref{lim} by the Lebesgue-Vitali Theorem  (one can apply this theorem, since the  base of $\frak{F}$ is  countable). 
 Thus,  for fixed $K=K_\varepsilon$, there is $B_\varepsilon \in\frak{F}$ that $I_4(x)<\varepsilon$ for all $x\in B_\varepsilon$, and the regularity is proved.

Conversely, if $\mathcal{H}_{\Phi,A,\mu}$ is regular then putting $f(x)\equiv l$ we get (iv).
\end{proof}

\begin{corollary}\label{cor:discrete1} 
Suppose that    a filter  $\frak{F}$ on $S$ has a countable base, and  a sequence  of mappings $A_n:S\to S'$ ($n\in \mathbb{Z}_+$) agrees with $\frak{F}$ and $\frak{F}'$. Let  sequences   $c_n(x)$ ($x\in S$) and $\mu_n>0$ satisfy the following conditions:

($i_d)$
  the series 
$$
 \sum_{n=0}^\infty|c_n(x)|\mu_n
$$
converges  on $S$ to a bounded function;

($ii_d$)  for every $\varepsilon>0$, there are such $K_\varepsilon\subset  \mathbb{Z}_+$, 
with $\sum_{n\in K_\varepsilon}\mu_n<\infty$, and $F_ \varepsilon\in \frak{F}$ that
$$
\sup_{x\in F_ \varepsilon}\sum_{n\in  \mathbb{Z}_+\setminus K_\varepsilon}|c_n(x)|\mu_n<\varepsilon;
$$

($iii_d$) for every $\varepsilon>0$, there is such $\delta>0$ that for all $E\subset \mathbb{Z}_+$ with $\sum_{n\in E}\mu_n<\delta$ one has
$$
\sup_{x\in S}\sum_{n\in E}|c_n(x)|\mu_n<\varepsilon.
$$

Then the transformation $\mathcal{H}_{c,A,\mu}$ given by  \eqref{discrete} is
regular  with respect to  $\frak{F}$ and $\frak{F}'$  and every Banach space $V$  if and only if the condition 

($iv_d$)
\begin{equation*}
\lim_{x,\frak{F}}\sum_{n=0}^\infty c_n(x)\mu_n=1.
\end{equation*}
  holds.
\end{corollary}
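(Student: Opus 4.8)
The plan is to obtain Corollary \ref{cor:discrete1} directly from Theorem \ref{th:2} by specializing the measure space. Take $\Omega=\mathbb{Z}_+$ equipped with its discrete $\sigma$-algebra and with the measure $\mu$ determined by $\mu(\{n\})=\mu_n>0$, put $\Phi(n,x):=c_n(x)$, and keep the family $A(n):=A_n$. Under this identification $\int_{\Omega}\Phi(u,x)\,d\mu(u)=\sum_{n=0}^\infty c_n(x)\mu_n$ and $\int_{\Omega}|\Phi(u,x)|\,d\mu(u)=\sum_{n=0}^\infty |c_n(x)|\mu_n$, so the operator \eqref{general} becomes exactly \eqref{discrete}; it then remains to verify that $(i_d)$--$(iv_d)$ imply hypotheses (i)--(iv) of Theorem \ref{th:2}.

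First I would record the simplifications that are built into the discrete setting. Since $\mathbb{Z}_+$ carries the discrete $\sigma$-algebra, every map $u\mapsto f(A(u)(x))$ from $\Omega$ to $V$ is automatically $\mu$-measurable, so the measurability clauses in Definitions \ref{general1}, \ref{df:regular} impose nothing; and since all $\mu_n>0$, a set of $\mu$-measure zero is empty, so the hypothesis that $(A_n)$ agrees with $\frak{F}$ and $\frak{F}'$ means precisely that $A_n(\frak{F})$ is a base of $\frak{F}'$ for every $n$, as required. Next, conditions $(ii_d)$, $(iii_d)$, $(iv_d)$ are verbatim translations of (ii), (iii), (iv) once one notes $\mu(K_\varepsilon)=\sum_{n\in K_\varepsilon}\mu_n$ and $\mu(E)=\sum_{n\in E}\mu_n$, while $(i_d)$ is literally (i)(a).

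The only point that needs an argument is hypothesis (i)(b) of Theorem \ref{th:2}, namely $\sup_{x\in S}|\Phi(n,x)|<\infty$ for each $n$, which is not among $(i_d)$--$(iii_d)$. Here I would use strict positivity of the weights: if $M:=\sup_{x\in S}\sum_{m=0}^\infty |c_m(x)|\mu_m<\infty$ by $(i_d)$, then for each fixed $n$ and each $x\in S$ one has $|c_n(x)|\mu_n\le M$, whence $\sup_{x\in S}|c_n(x)|\le M/\mu_n<\infty$. With (i)(b) established, all hypotheses of Theorem \ref{th:2} hold for $\mathcal{H}_{\Phi,A,\mu}=\mathcal{H}_{c,A,\mu}$, and its conclusion — that regularity with respect to $\frak{F}$, $\frak{F}'$ and every Banach space $V$ is equivalent to (iv), i.e. to $(iv_d)$ — is exactly the statement of the corollary. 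I do not expect any genuine obstacle here; the only thing to be careful about is precisely the deduction of (i)(b) from the boundedness asserted in $(i_d)$ together with $\mu_n>0$, and the remark that measurability is vacuous on a discrete $\Omega$.
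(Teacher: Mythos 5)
Your proposal is correct and follows essentially the same route as the paper: specialize Theorem \ref{th:2} to $\Omega=\mathbb{Z}_+$, $\mu(\{n\})=\mu_n$, $\Phi(n,x)=c_n(x)$, and deduce (i)(b) from $(i_d)$ via $\sup_{x\in S}|c_n(x)|\le C/\mu_n$, the remaining hypotheses translating verbatim. Your extra remarks (measurability is vacuous on discrete $\Omega$, and $\mu$-a.e.\ agreement means agreement for every $n$ since $\mu_n>0$) are correct details the paper leaves implicit as ``obvious.''
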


\begin{proof} If
$$
C:= \sup_{x\in S}\sum_{n=0}^\infty|c_n(x)|\mu_n,
$$
then $\sup_{x\in S}|c_n(x)|<C/\mu_n$, and the condition (i) of Theorem \ref{th:2} where $\Omega=\mathbb{Z}_+$, $\mu(\{n\})=\mu_n$, and  $\Phi(n,x)=c_n(x)$ holds. The validity of other conditions of this theorem with $\Omega =\mathbb{Z}_+$, $c_n(x):=\Phi(n,x)$, $A_n(x):=A(n,x)$, and $\mu_n:=\mu(\{n\})$ ($n\in \mathbb{Z}_+$)  is obvious. 
\end{proof}

\begin{corollary}\label{cor:discrete2} 
Suppose that    a filter  $\frak{F}$ on $S$ has a countable base, and  a sequence  of mappings $A_n:S\to S'$ ($n\in \mathbb{Z}_+$) agrees with $\frak{F}$ and $\frak{F}'$. Let  sequences   $c_n(x)$ ($x\in S$) and $\mu_n>0$ satisfy the following condition:

($v_d)$
 $\mu_n\downarrow 0$, and  the series 
$$
 \sum_{n=0}^\infty|c_n(x)|
$$
converges  on $S$ to a bounded function.

Then the transformation $\mathcal{H}_{c,A,\mu}$ given by  \eqref{discrete} is
regular  with respect to  $\frak{F}$ and $\frak{F}'$ and every Banach space $V$ if and only if the condition 
($iv_d$)   holds.
\end{corollary}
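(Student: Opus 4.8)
The plan is to deduce the three hypotheses $(i_d)$, $(ii_d)$, $(iii_d)$ of Corollary~\ref{cor:discrete1} from the single hypothesis $(v_d)$ and then to invoke that corollary directly. Put $C:=\sup_{x\in S}\sum_{n=0}^\infty|c_n(x)|$, which is finite by $(v_d)$. I may assume $C>0$: if $C=0$ then every $c_n$ vanishes, $\mathcal{H}_{c,A,\mu}\equiv 0$ is not regular, and $(iv_d)$ fails, so both sides of the asserted equivalence are false. Since $\mu_n\downarrow 0$ we have $0<\mu_n\le\mu_0$ for all $n$, hence
\[
\sum_{n=0}^\infty|c_n(x)|\mu_n\le\mu_0\sum_{n=0}^\infty|c_n(x)|\le\mu_0 C\qquad(x\in S),
\]
which is precisely $(i_d)$.

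For $(ii_d)$ and $(iii_d)$ I would use the same device: given $\varepsilon>0$, exploit $\mu_n\downarrow 0$ to choose $N$ so large that $\mu_N$ is as small as needed, and split an index set at $N$. For $(ii_d)$, choose $N$ with $\mu_N<\varepsilon/C$, put $K_\varepsilon:=\{0,1,\dots,N-1\}$ (a finite set, so $\sum_{n\in K_\varepsilon}\mu_n<\infty$) and $F_\varepsilon:=S\in\frak{F}$; then, using $\mu_n\le\mu_N$ for $n\ge N$,
\[
\sup_{x\in F_\varepsilon}\sum_{n\in\mathbb{Z}_+\setminus K_\varepsilon}|c_n(x)|\mu_n\le\mu_N\,\sup_{x\in S}\sum_{n\ge N}|c_n(x)|\le\mu_N C<\varepsilon.
\]
For $(iii_d)$, choose $N$ with $\mu_N<\varepsilon/(2C)$ and set $\delta:=\varepsilon/(2C)$; for any $E\subset\mathbb{Z}_+$ with $\sum_{n\in E}\mu_n<\delta$, splitting $E$ into its part below $N$ and its part from $N$ on, and bounding $|c_n(x)|\le C$ on the first and $\mu_n\le\mu_N$ on the second, I get
\[
\sum_{n\in E}|c_n(x)|\mu_n\le C\sum_{n\in E,\,n<N}\mu_n+\mu_N\sum_{n\ge N}|c_n(x)|\le C\delta+\mu_N C<\varepsilon
\]
for all $x\in S$, which is $(iii_d)$.

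With $(i_d)$--$(iii_d)$ in hand, Corollary~\ref{cor:discrete1} applies verbatim and yields the equivalence of the regularity of $\mathcal{H}_{c,A,\mu}$ with condition $(iv_d)$. I do not expect a genuine obstacle; the only step that needs a little care is $(iii_d)$, where one must treat separately the finitely many ``heavy'' weights $\mu_0,\dots,\mu_{N-1}$ --- controlled by the smallness of $\sum_{n\in E}\mu_n$ together with the crude bound $|c_n(x)|\le C$ --- and the uniformly small tail weights, controlled by $\mu_n\le\mu_N$ and the boundedness of $\sum_n|c_n(x)|$.
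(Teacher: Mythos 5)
Your proof is correct and follows essentially the same route as the paper: both arguments reduce the statement to Corollary \ref{cor:discrete1} by deriving $(i_d)$--$(iii_d)$ from $(v_d)$ and the monotonicity $\mu_n\downarrow 0$. The differences are only cosmetic: the paper invokes the Dirichlet test for function series to get $(i_d)$ and $(ii_d)$ and disposes of $(iii_d)$ with the one-line bound $\sum_{n\in E}|c_n(x)|\mu_n\le C\sum_{n\in E}\mu_n$ (using $|c_n(x)|\le C$), whereas you prove the same estimates by hand with the termwise bounds $\mu_n\le\mu_0$ and $\mu_n\le\mu_N$ --- your split at $N$ in the proof of $(iii_d)$ is valid but not needed.
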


\begin{proof} The condition ($v_d)$ implies by the Dirichlet test for function series that conditions  ($i_d)$ and ($ii_d$) are valid. The condition ($iii_d)$ is valid as well, since if
$$
\sup_{x\in S} \sum_{n=0}^\infty|c_n(x)|=:C,
$$
then we have
$$
\sup_{x\in S}\sum_{n\in E}|c_n(x)|\mu_n\le C\sum_{n\in E}\mu_n.
$$

\end{proof}

Now we are aimed to consider the following slightly more general class of operators.

\begin{definition}\label{df:T} Let the conditions of Definition \ref{general1} are fulfilled, $S=S'$, and  $a:S\to \mathbb{C}$ be a function. By a Hausdorff-type operator of a second kind we mean the following transformation
\begin{equation}\label{eq:Ta}  
T_{a,\Phi,A,\mu}f= T_af=af+\mathcal{H}_{\Phi,A,\mu}f.
\end{equation}
\end{definition}

\begin{corollary}\label{cor:T} Suppose that the conditions of Definition \ref{general1} are fulfilled,  $S=S'$, a filter  $\frak{F}$ on $S$ has a countable base, and  a family of mappings $A(u):S\to S$ ($u\in \Omega$) agrees with $\frak{F}$. Let the kernel $\Phi$ satisfies the conditions (i)---(iii) of Theorem \ref{th:2}, the function $a$ is bounded, and the limit $\alpha:=\lim_{x,\frak{F}}a(x)$ exists. In order that the  Hausdorff-type operator of the second kind \eqref{eq:Ta}
 should be 
regular  with respect to $\frak{F}$  and every Banach space $V$  (i.e. that $\lim_{x,\frak{F}}f(x)= l$ in norm of $V$ where $f$ satisfies the conditions of Definition \ref{df:regular} should imply $\lim_{x,\frak{F}}(T_af)(x)= l$ in norm of  $V$), it is necessary and sufficient that

(iv')
\begin{equation*}
\lim_{x,\frak{F}}\int_{\Omega} \Phi(u,x)d\mu(u)=1-\alpha.
\end{equation*}
\end{corollary}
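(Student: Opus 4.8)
The plan is to reduce Corollary~\ref{cor:T} to Theorem~\ref{th:2} by absorbing the multiplication operator $f\mapsto af$ into the Hausdorff-type framework. First I would observe that the term $a(x)f(x)$ can be written as a Hausdorff-type integral against a point mass: enlarge the measure space to $\Omega_1:=\Omega\sqcup\{*\}$ with $\mu_1|_\Omega:=\mu$ and $\mu_1(\{*\}):=1$, put $A_1(*):=\mathrm{id}_S$ (which trivially agrees with $\frak F$, since $\mathrm{id}_S(\frak F)=\frak F$ is a base of $\frak F$), and set $\Phi_1(*,x):=a(x)$, $\Phi_1(u,x):=\Phi(u,x)$ for $u\in\Omega$. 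Then $T_af=\mathcal H_{\Phi_1,A_1,\mu_1}f$, and the family $A_1$ agrees with $\frak F$ because $A$ does and the single added map is the identity.

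Next I would check that the hypotheses (i)--(iii) of Theorem~\ref{th:2} transfer from $(\Phi,A,\mu)$ to $(\Phi_1,A_1,\mu_1)$. For (i)a, $\int_{\Omega_1}|\Phi_1(u,x)|d\mu_1(u)=|a(x)|+\int_\Omega|\Phi(u,x)|d\mu(u)$, which is bounded in $x$ since $a$ is bounded and $\Phi$ satisfies (i)a; (i)b is immediate, the extra point contributing $\sup_x|a(x)|<\infty$. For (ii), given $\varepsilon>0$ take the $K_\varepsilon\subseteq\Omega$ and $F_\varepsilon\in\frak F$ from (ii) for $\Phi$ and replace $K_\varepsilon$ by $K_\varepsilon\cup\{*\}$, which still has finite $\mu_1$-measure and on whose complement the integral is unchanged. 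For (iii), note that any $E\subset\Omega_1$ with $\mu_1(E)<\delta<1$ cannot contain $*$, so $E\subset\Omega$ and the bound for $\Phi$ applies verbatim. Finally, condition (iv) for $\Phi_1$ reads $\lim_{x,\frak F}\bigl(a(x)+\int_\Omega\Phi(u,x)d\mu(u)\bigr)=1$, i.e.\ $\alpha+\lim_{x,\frak F}\int_\Omega\Phi(u,x)d\mu(u)=1$, which is exactly (iv'). Applying Theorem~\ref{th:2} to $\mathcal H_{\Phi_1,A_1,\mu_1}=T_a$ then yields the claimed equivalence, with the Bochner integrability of the added point-mass term being trivial.

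Alternatively, and perhaps more transparently for the reader, I would give a direct argument paralleling the proof of Theorem~\ref{th:2}: write
\[
(T_af)(x)-l=a(x)\bigl(f(x)-l\bigr)+\bigl(\mathcal H_{\Phi,A,\mu}f\bigr)(x)-\alpha l+(\alpha-1)l,
\]
then estimate $\|a(x)(f(x)-l)\|\le\|a\|_\infty\|f(x)-l\|\to0$ along $\frak F$ (using that $\mathrm{id}$ trivially agrees with $\frak F$, so $\lim_{x,\frak F}f(x)=l$), handle $\|\mathcal H_{\Phi,A,\mu}f(x)-\alpha l\|$ by adding and subtracting $l\int_\Omega\Phi(u,x)d\mu(u)$ and invoking conditions (i)--(iii) exactly as in Theorem~\ref{th:2} to get $\mathcal H_{\Phi,A,\mu}f(x)\to l\cdot\lim_{x,\frak F}\int_\Omega\Phi d\mu$, and observe that the remaining scalar terms cancel precisely when (iv') holds. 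The converse direction is obtained, as in Theorem~\ref{th:2}, by testing with the constant function $f\equiv l$, which gives $\lim_{x,\frak F}\bigl(a(x)l+l\int_\Omega\Phi(u,x)d\mu(u)\bigr)=l$ for every $l$, hence (iv').

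I do not expect a genuine obstacle here; the only point requiring a little care is the verification of condition (iii) for the enlarged data (resp.\ the bookkeeping of the scalar cancellation in the direct approach), since one must make sure the added identity map and the coefficient $a$ do not spoil uniform absolute continuity of integrals --- which they do not, because a single atom of finite measure is harmless and $a$ is bounded. The reduction route via $\Omega_1$ is cleaner and I would present that one, relegating the cancellation identity to a one-line remark.
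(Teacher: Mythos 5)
Your primary (reduction) argument is correct and takes a genuinely different route from the paper. The paper does not enlarge the measure space; it proves the corollary directly, writing
\begin{align*}
(T_af)(x)-l&=a(x)(f(x)-l)+\int_{\Omega}\Phi(u,x)\bigl(f(A(u)(x))-l\bigr)d\mu(u)
+l\left(\int_{\Omega}\Phi(u,x)d\mu(u)-(1-a(x))\right),
\end{align*}
and then reuses the key fact established inside the proof of Theorem \ref{th:2} (that the middle integral tends to $0$ along $\frak F$ under (i)--(iii)), together with $a(x)\to\alpha$ and (iv'); necessity again comes from $f\equiv l$. This is essentially your ``alternative'' direct argument --- note, though, that your displayed identity there is off by the term $a(x)l$ (it should contain $(a(x)-\alpha)l$, or equivalently the paper's $-(1-a(x))l$ grouping), a slip your surrounding prose effectively corrects. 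Your preferred route instead adjoins an atom $\{*\}$ of mass $1$ with $A_1(*)=\mathrm{id}_S$ and $\Phi_1(*,x)=a(x)$, verifies that (i)--(iii) and the agreement condition survive the enlargement (the only mildly delicate point, condition (iii), is handled correctly by taking $\delta<1$ so that $E$ cannot contain the atom), and observes that (iv) for $\Phi_1$ is exactly (iv') since $\lim_{x,\frak F}a(x)=\alpha$ exists; Theorem \ref{th:2} then gives both directions at once. Your reduction buys a proof with no new estimates and makes clear that operators of the second kind are not genuinely more general than those of Definition \ref{general1}; the paper's direct computation buys brevity and avoids the bookkeeping of an enlarged measure space and the (routine but necessary) check that the class of admissible test functions $f$ in Definition \ref{df:regular} is unchanged by adding the identity map at the atom.
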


\begin{proof} Note that
\begin{align*}
(T_af)(x)-l&=a(x)(f(x)-l)+ (\mathcal{H}_{\Phi,A,\mu}f)(x)-(1-a(x))l\\
 &=a(x)(f(x)-l)+\int_{\Omega} \Phi(u,x)(f(A(u)(x))-l)d\mu(u)\\
 &+l\left(\int_{\Omega} \Phi(u,x)d\mu(u)-(1-a(x))\right).
 \end{align*}
If  (i)---(iii)  hold and $\lim_{x,\frak{F}}f(x)=l$ then as was shown in the the proof of Theorem \ref{th:2}
$$
\lim_{x,\frak{F}}\int_{\Omega} \Phi(u,x)(f(A(u)(x))-l)d\mu(u)=0.
$$
In view of (iv') the sufficiency  follows.

Assuming $f(x)\equiv l$, we obtain the necessity of the condition (iv').

\end{proof}

In the following remarks we discuss the conditions  of Theorem \ref{th:2} and Corollary \ref{cor:T}. 

\begin{remark}\label{on(agree)} The necessity of the condition $\mu(\{0\})=0$ in Theorem \ref{th:1} shows that the condition in Theorem \ref{th:2} that the
 family  $(A(u))_{u\in \Omega}$ agrees with  filters  $\frak{F}$ and $\frak{F}'$ cannot be omitted. Indeed, in   Theorem \ref{th:1}  $\Omega =[0,1]$, $S=S'=(0,\infty)$, $\frak{F}=(x\to +\infty)$, $A(u)(x)=ux$, $V=\mathbb{C}$. Thus, the map $A(u)$ agrees with $\frak{F}$ if and only if $u\ne 0$. So, if the
 family  $(A(u))_{u\in \Omega}$ does not agrees with   $\frak{F}$ then $\mu(\{0\})\ne 0$ and $H_\mu$ is not regular.
\end{remark}

\begin{remark}\label{on(i)}
For the discrete measure $\mu$ the condition (i) in  Theorem \ref{th:2} may be necessary. Indeed,
 in the case $\Omega=S=S'=\mathbb{Z}_+$, $\mu(\{n\})\equiv 1$, $\frak{F}=\frak{F}'=(n\to\infty)$, and   $A_n(x)\equiv n\in \mathbb{Z}_+$, this condition is necessary for the regularity of the transformation \eqref{discrete} by the Toeplitz-Shur-Silverman  Theorem \cite[Chapter III, \S 3.2, Theorem 2]{H}.

 Surprisingly,  if the measure $\mu$ is atomless the condition  a) in (i)  can be omitted because in this case  the uniform integrability of the family \eqref{family} (which guarantees  the application of the Lebesgue-Vitali Theorem) is equivalent to the uniform absolute continuity of integrals   (see \cite[Proposition  4.5.3]{Bogachev}) which follows from the condition (iii).

\end{remark}

\begin{remark}\label{on(ii)} If the measure $\mu$ is finite the condition (ii) is  plainly satisfied.
\end{remark}

\begin{remark}\label{on(iii)}
If the kernel $\Phi$ is a bounded function  the condition (iii) is  plainly satisfied, too.
\end{remark}

\begin{remark} The conditions (i) --- (iii) in  Theorem \ref{th:2} follow from the next condition

(v) there is such  $\varphi\in L^1(\mu)$  that
$$
|\Phi(u,x)|\le \varphi(u)\mbox{ for all   } x\in S \mbox{  and } u\in \Omega.
$$

\end{remark}

Let us consider some  examples of  applications of  Theorem \ref{th:2}.

\begin{example}\label{ex:hardy} Consider the operator \eqref{hardy}. In this case  $S=S'=(0,\infty)$, $\frak{F}=(x\to +\infty)$,  $\Omega=[0,1]$,  $A(u)(x)=ux$, $\mu$ is finite. Since $\Phi\equiv 1$,  the conditions (i)---(iii) of Theorem \ref{th:2} hold.   The family of mappings  $(A(u))$ agrees with $\frak{F}$ if and only if  $\mu(\{0\})=0$. The condition (iv) of Theorem \ref{th:2} holds if and only if  $\mu([0,1])=1$.
\end{example}

The next example deals with Hausdorff operators on topological groups. A special case of this result appeared in  \cite{JOTH}.

Let  $S=S'=G$ be a    locally compact  group,  $A(u)\in {\rm Aut}(G)$, and the  group $\mathrm{Aut}(G)$  of all topological automorphisms of  $G$ is equipped   with its natural (Braconnier) topology. In this topology the sets 
$$
\mathcal{O}(C, W):=\{A\in \mathrm{Aut}(G): A(x)x^{-1}\in W,   A^{-1}(x)x^{-1}\in W \forall x\in C\} 
$$
where     $C$ runs over  all
compact subsets of $G$ and $W$ runs over all neighborhoods of the unit in $G$
constitute a fundamental system of neighborhoods of the identity  (see, e.~g., \cite[(26.1)]{HiR}, \cite[Section III.3]{Hoch}).

We are going to apply Theorem \ref{th:2} to the following special case. Let  $f:G\to V$. We say that a vector $l\in V$ is a limit of $f$ as $x\to \infty$ (and write  $\lim_{x\to \infty}f(x)=l$) if $\|f(x)-l\|$ vanishes outside compact subsets of $G$, in other words,  if $\lim_{x,\frak{F}_\infty} f(x) = l$ where  $\frak{F}=\frak{F}'=\frak{F}_\infty$ is the filter on $G$ whose base $\frak{B}_\infty$ consists of all nonempty  complements of compact subsets of $G$.

Recall that a topological space $X$ is said to be $\sigma$-compact (compact at the infinity  in  terminology of N. Bourbaki) if $X$ is a union of a sequence of compact sets.

\begin{theorem}\label{group}  Let $\Omega$ be a topological space with  Borel measure $\mu$, and  $G$ be a locally compact  $\sigma$-compact group. Assume that a family   of topological  automorphisms $A:\Omega\to {\rm Aut}(G)$ is  continuous.  Then under the conditions (i) ---  (iv)  the transformation $\mathcal{H}_{\Phi,A,\mu}$ is regular  with respect to $\frak{F}_\infty$  and every Banach space $V$.

Conversely, if $\mathcal{H}_{\Phi,A,\mu}$  is regular  with respect to $\frak{F}_\infty$  and some nontrivial Banach space $V$, the equality (iv) holds. 
\end{theorem}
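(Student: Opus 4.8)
The plan is to obtain Theorem \ref{group} by checking that the hypotheses of Theorem \ref{th:2} are met with $S=S'=G$, $\frak{F}=\frak{F}'=\frak{F}_\infty$, and $\Phi$ as in Definition \ref{general1} satisfying (i)--(iii); beyond what is already assumed, the only two points to verify are that $\frak{F}_\infty$ has a countable base and that the family $(A(u))_{u\in\Omega}$ agrees with $\frak{F}_\infty$.

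First I would exhibit a countable base of $\frak{F}_\infty$. Using that $G$ is locally compact and $\sigma$-compact, I would build an exhaustion $K_1\subseteq K_2\subseteq\cdots$ by compact sets with $K_n\subseteq\operatorname{int}K_{n+1}$ and $\bigcup_{n\geq1}K_n=G$ in the standard way: write $G=\bigcup_n C_n$ with $C_n$ compact and increasing, fix a relatively compact open neighbourhood $U$ of the identity, put $K_1=\overline U$, and inductively let $K_{n+1}$ be the closure of the union of finitely many translates of $U$ that cover the compact set $K_n\cup C_n$. If $K\subseteq G$ is compact, the open cover $\{\operatorname{int}K_n\}_{n\geq1}$ of $K$ has a finite subcover, so $K\subseteq K_m$ for some $m$ by nesting, whence $G\setminus K_m\subseteq G\setminus K$; thus $\{G\setminus K_n:n\geq1\}$ is a countable base of $\frak{F}_\infty$.

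Next I would check the agreement condition. Each $A(u)$ is a topological automorphism of $G$, hence a homeomorphism, so $A(u)$ and $A(u)^{-1}$ map compact sets onto compact sets; consequently, as $K$ runs over the compact subsets of $G$, the sets $A(u)(G\setminus K)=G\setminus A(u)(K)$ run over all nonempty complements of compact subsets of $G$. Hence $A(u)(\frak{B}_\infty)=\frak{B}_\infty$ and $A(u)(\frak{F}_\infty)=\frak{F}_\infty$, which is trivially a base of $\frak{F}_\infty$, for every $u\in\Omega$ (so no exceptional $\mu$-null set occurs here). With these two facts in hand, Theorem \ref{th:2} applies and, under (iv), gives the regularity of $\mathcal{H}_{\Phi,A,\mu}$ with respect to $\frak{F}_\infty$ and every Banach space $V$.

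Finally I would settle the converse. Assuming $\mathcal{H}_{\Phi,A,\mu}$ regular with respect to $\frak{F}_\infty$ and some $V\neq\{0\}$, I pick $l\in V$ with $l\neq0$; the constant function $f\equiv l$ is bounded and $u\mapsto f(A(u)(x))\equiv l$ is $\mu$-measurable, so $f$ meets the requirement in Definition \ref{df:regular}, and by condition (i) and Remark \ref{rem:1} the Bochner integral defining $(\mathcal{H}_{\Phi,A,\mu}f)(x)$ exists and equals $l\int_{\Omega}\Phi(u,x)\,d\mu(u)$. Regularity then gives $\lim_{x,\frak{F}_\infty}l\int_{\Omega}\Phi(u,x)\,d\mu(u)=l$, so $\|l\|\,\bigl|\int_{\Omega}\Phi(u,x)\,d\mu(u)-1\bigr|\to0$ along $\frak{F}_\infty$, and since $\|l\|\neq0$ condition (iv) follows. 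I expect the one genuinely substantive step to be the construction of the exhausting sequence $K_n$ together with the observation that every compact subset of $G$ lies in some $K_n$; this is exactly where local compactness and $\sigma$-compactness of $G$ enter, and it is what makes the countable-base hypothesis of Theorem \ref{th:2} --- needed for the Lebesgue-Vitali step in its proof --- available. Everything else is routine bookkeeping.
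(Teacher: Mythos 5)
Your proposal follows essentially the same route as the paper: reduce to Theorem \ref{th:2} by (a) producing a countable base of $\frak{F}_\infty$ from $\sigma$-compactness and (b) checking that each automorphism $A(u)$ maps complements of compacta onto complements of compacta, so that the family agrees with $\frak{F}_\infty$; the converse via the constant function $f\equiv l$ with $l\neq 0$ is exactly the paper's argument (and you correctly handle the ``nontrivial $V$'' point). Your hand-made compact exhaustion $K_n\subseteq\operatorname{int}K_{n+1}$ replaces the paper's citation of Bourbaki for an increasing sequence of relatively compact open sets $U_n$ with the property that every compact set lies in some $U_n$; this is the same idea, just carried out explicitly.

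The one point you omit, and it is the only place where the hypothesis that $A:\Omega\to\mathrm{Aut}(G)$ is \emph{continuous} enters, is the measurability verification: the paper uses the continuity of the evaluation map $\phi\mapsto\phi(x)$ on $\mathrm{Aut}(G)$ in the Braconnier topology to conclude that $u\mapsto A(u)(x)$ is continuous, hence Borel, so that $u\mapsto f(A(u)(x))$ is $\mu$-measurable for every bounded \emph{Borel measurable} $f$. Under the literal Definition \ref{df:regular} this measurability is built into the class of admissible $f$, so your argument does prove the statement as worded; but then the continuity assumption on $A$ goes entirely unused in your proof, which should have been a warning sign. The paper's intent (made explicit in the Delsarte example, where regularity is applied to arbitrary bounded Borel measurable $f$) is that the theorem cover all such $f$, and for that the Braconnier-topology argument is needed. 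You should add this step, or at least note explicitly why the continuity hypothesis is there.
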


\begin{proof} We shall show that the filter $\frak{F}_\infty$ in this case has a countable base.  Since  $G$  is  compact at the infinity, there is an increasing  sequence $(U_n)$ of  open subsets of $G$ with compact closure $ \overline{U}_n$  such that  $G=\cup_{n=1}^\infty U_n$ (e.g., \cite[Chapter I, \S 9, Proposition 15]{GenTopI}). It is known that  every compact subset of $G$ is contained in some $U_n$ (e.g.,  \cite[Chapter I, \S 9, Corollary 1 of Proposition 15]{GenTopI}). It follows that the countable set $\{G\setminus  \overline{U}_n: n\in\mathbb{N}\}$ is a base of $\frak{F}_\infty$.

Further, since each set of the form  $A(u)(G\setminus K)=G\setminus A(u)(K)$ where $K$ is a compact subset of $G$ belongs to  $\frak{B}_\infty$, one has   that $A(u)(\frak{F}_\infty)$ is a base of  $\frak{F}_\infty$ for every  $u\in \Omega$. Thus, the family  $(A(u))_{u\in \Omega}$ agrees with $\frak{F}_\infty$.

Finally, since    for each $x\in G$ the map $\phi\mapsto \phi(x)$, ${\rm Aut}(G)\to G$ is continuous with respect to the
Braconnier topology \cite[Proposition III.3.1, p. 40]{Hoch},  the  map  $u\mapsto A(u)(x)$, $\Omega\to G$ is continuous (and thus  Borel measurable), as well. So, the map  $u\mapsto f(A(u)(x))$ is $\mu$-measurable  for each $x\in G$, since $f$ is  Borel measurable,  and  all conditions of Theorem  \ref{th:2} are fulfilled.
\end{proof}

\begin{example}\label{delsarte}  Let $G$ be a locally compact topological group and $\Omega$
a compact subgroup of $\mathrm{Aut}(G)$ with normalized Haar measure $\mu$. The generalized   shift operator of Delsarte is
$$
(T_hf)(x)=\int_{\Omega} f(hu(x))d\mu(u)\quad (x,h\in G)
$$
(\cite{Delsarte}, see also \cite[Chapter I, \S 2]{Lev}). This is an operator of the form $\mathcal{H}_{1}S_h$ where $S_hf(x)=f(hx)$  is a usual left shift of a function $f:G\to \mathbb{C}$ and
$$
(\mathcal{H}_{1}f)(x)=\int_{\Omega} f(u(x))d\mu(u)
$$
is a Hausdorff-type operator over $G$ where $\Phi(u,x)\equiv 1$ and $A(u)= u$.  The operator $\mathcal{H}_{1}$ satisfies all the conditions of Theorem \ref{group}. Therefore  for  a bounded Borel measurable  function $f$ one has  $\lim_{x\to \infty}(T_h f)(x)=l$  for all $h\in G$ whenever $\lim_{x\to \infty}f(x)=l$.

\end{example}

Since sometimes the language of nets (or sequences) is more convenient that of filters one, we shall give a   
version of  Theorem \ref{th:2} in terms of nets. 

Recall that  a net $(x_i)_{i\in I}$ in  a topological space $S$  approaches the infinity ($x_i\to\infty$ in symbols) if for every compact $K\subset S$ there is such $i_K\in I$ that $x_i\in S\setminus K$ for all $i\ge i_K$.

In the following we write $A(\infty)=\infty$ for a map $A:S\to S$ if for each net  $(x_i)_{i\in I}$   in $S$   such that $x_i\to\infty$ and the partially ordered set $I$  has a countable cofinal part one has $A(x_i)\to\infty$.

\begin{theorem}\label{th:3}
Suppose that the conditions of Definition \ref{general1} are fulfilled and  $S$ is a topological space. Assume that (i)--- (iii) hold and 
 a family $(A(u))_{u\in \Omega}$ of mappings $S\to S$ satisfies $A(u)(\infty)=\infty$ for all $u\in \Omega$.

Let
\begin{equation}\label{norm20}
\lim_{i\in I}\int_{\Omega} \Phi(u,x_i)d\mu(u)=1
\end{equation}
if  $x_i\to\infty$ and $I$  has  a countable cofinal part.

Then the transformation $\mathcal{H}_{\Phi,A,\mu}$  is
regular in the following sense. For every Banach   space $V$ and for every bounded   function $f:S\to V$ such that the  mapping  $u\mapsto f(A(u)(x))$ is $\mu$-measurable for each $x\in S$ the equality  $\lim_{i\in I} f(x_i) = l$  where  $x_i\to\infty$ and $I$  has a countable cofinal part  implies $\lim_{i\in I}(\mathcal{H}_{\Phi,A,\mu} f)(x_i) = l$.

Conversely, if $\mathcal{H}_{\Phi,A,\mu}$  is regular  for some nontrivial Banach space $V$, the equality \eqref{norm20}  holds.
\end{theorem}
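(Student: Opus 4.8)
The plan is to run the argument of Theorem \ref{th:2} with the abstract filter replaced by the tail filter of the given net; the countable cofinality of the index set is precisely what makes that tail filter countably based, which is all that the Lebesgue-Vitali theorem used there needs.

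First I would fix a net $(x_i)_{i\in I}$ with $x_i\to\infty$ and a countable cofinal part $J$ of $I$; since $I$ is directed one may list $J=\{j_1\le j_2\le\cdots\}$. Let $\frak{F}$ be the filter on $S$ generated by the tails $T_n:=\{x_i:i\ge j_n\}$ (these form a filter base, since $T_{\max(m,n)}\subseteq T_m\cap T_n$). By cofinality of $J$, $\frak{F}$ is exactly the filter of all subsets of $S$ containing a tail of the net, so $\{T_n\}$ is a countable base of $\frak{F}$ and, for any $g:S\to V$, one has $\lim_{x,\frak{F}}g(x)=l$ if and only if $g(x_i)\to l$ along the net. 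Since $x_i\to\infty$, every complement of a compact subset of $S$ contains some $T_n$, hence $\frak{F}$ refines the filter $\frak{F}_\infty$; in particular the set $F_\varepsilon$ furnished by (ii) lies in $\frak{F}$.

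Next I would check the analog of \eqref{lim}: for each $u\in\Omega$ the image net $(A(u)(x_i))_{i\in I}$ has the same (hence countably cofinal) index set and approaches infinity because $A(u)(\infty)=\infty$, so applying the convergence hypothesis on $f$ to this net yields $f(A(u)(x_i))\to l$, i.e. $\lim_{x,\frak{F}}f(A(u)(x))=l$. From this point everything is a transcription of the proof of Theorem \ref{th:2} with $\frak{F}$ in place of the abstract filter: the identity for $(\mathcal{H}_{\Phi,A,\mu}f)(x)-l$ and the estimate \eqref{est:1} give $\|(\mathcal{H}_{\Phi,A,\mu}f)(x)-l\|\le I_1(x)+I_2(x)$ with $I_2(x)\to 0$ along $\frak{F}$ by \eqref{norm20}; splitting $I_1=I_3+I_4$ as in \eqref{est:2} over $\Omega\setminus K_\varepsilon$ and $K_\varepsilon$ (with $K_\varepsilon$ the set of finite measure from (ii)), one gets $I_3(x)<\varepsilon$ for $x\in F_\varepsilon\in\frak{F}$ from (ii) and \eqref{est:3}, while $\lim_{x,\frak{F}}I_4(x)=0$ (this is \eqref{I_4to0}) by the Lebesgue-Vitali theorem \cite[Theorem 4.5.4]{Bogachev}: by \eqref{est:3}, (i) a) and (iii) the family \eqref{family} is bounded in $L^1(\mu)$ and has uniformly absolutely continuous integrals, hence is uniformly integrable \cite[Proposition 4.5.3]{Bogachev}, its members tend to $0$ pointwise in $u$ along $\frak{F}$ by (i) b) together with $\lim_{x,\frak{F}}f(A(u)(x))=l$, and $\frak{F}$ is countably based. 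Rewriting $\lim_{x,\frak{F}}(\mathcal{H}_{\Phi,A,\mu}f)(x)=l$ in net language closes the sufficiency direction.

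For the converse I would take a nontrivial $V$, fix $l\ne 0$ in $V$, and apply regularity to the constant function $f\equiv l$, which trivially satisfies all hypotheses for every admissible net: from $(\mathcal{H}_{\Phi,A,\mu}f)(x_i)=l\int_\Omega\Phi(u,x_i)\,d\mu(u)\to l$ and $l\ne 0$ one obtains \eqref{norm20}. I expect the only genuinely delicate point to be the reading and use of the convergence hypothesis on $f$: it must be taken strong enough to be applicable also to the image nets $(A(u)(x_i))_{i\in I}$ — for instance as convergence to $l$ along \emph{every} net approaching infinity whose index set has a countable cofinal part — this being the net counterpart of the ``agreement'' hypothesis in Theorem \ref{th:2}. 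Granting that, the countable cofinality of $I$ does the rest, since it is exactly what guarantees that the tail filter $\frak{F}$ has a countable base and thus keeps Bogachev's Lebesgue-Vitali theorem available.
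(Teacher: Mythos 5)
Your proposal is correct and takes essentially the same approach as the paper, which only remarks that the proof is ``similar to the proof of Theorem~\ref{th:2}'' with the countable cofinal part of $I$ justifying the use of the Lebesgue--Vitali theorem; your translation to the tail filter of the net, the verification that $A(u)(x_i)\to\infty$ yields the analog of \eqref{lim}, and the constant-function argument for the converse are exactly the intended details. Your reading of the convergence hypothesis on $f$ (convergence along every net approaching infinity with countably cofinal index set) is the one the statement requires, so there is no gap.
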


The proof of this theorem is similar to the proof of Theorem \ref{th:2} (one can apply the Lebesgue-Vitali Theorem in this case, too, since the  partially ordered set $I$  has a countable cofinal part).

 Let $(A_u)_{u\in \Omega}$ be a Borel measurable family of non-singular real matrices of order $n$,  $\mathbb{R}^n_{>0}:=(0,+\infty)^n$, and $b:\Omega\to \mathbb{R}^n_{>0}$ be some Borel measurable  map. In the next  corollary we consider a Hausdorff-type operator of the form
$$
(H_{\Phi,A,\mu} f)(x)=\int_\Omega \Phi(u,x)f(A_u x+b(u))d\mu(u)
$$
($x\in \mathbb{R}^n$ is a column vector). We write $A_u>0$ if $A_u$ is a matrix with positive elements. We write $x\to+\infty$ if $x\in \mathbb{R}^n_{>0}$ and $x\to\infty$.

\begin{corollary}(cf. \cite{faa})  Let $\Omega$ be a topological space with a $\sigma$-finite  Borel measure $\mu$, $A_u\in \mathrm{GL}(n, \mathbb{R})$ and both  maps $b:\Omega\to \mathbb{R}^n_{>0}$, and  $u\mapsto A_u:\Omega\to \mathrm{GL}(n, \mathbb{R})$ are Borel measurable. Assume that each $A_u>0$ and conditions (i) --- (iii), and \eqref{norm20} hold. Then $H_{\Phi,A,\mu}$ is
regular in the following sense.  If  $f$ is a bounded Borel measurable $V$-valued function on  $\mathbb{R}^n_{>0}$ then
$\lim_{x\to+\infty}(H_{\Phi,A,\mu} f)(x) = l$ if    $\lim_{x\to+\infty} f(x) = l$. 

Conversely, if $H_{\Phi,A,\mu}$  is regular for some nontrivial Banach space $V$, the equality  \eqref{norm20}  holds.
\end{corollary}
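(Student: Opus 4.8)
The plan is to reduce the statement to Theorem \ref{th:3} applied on the \emph{closed} orthant $S=S'=\mathbb{R}^n_{\ge 0}$, which is locally compact and $\sigma$-compact and on which a net escapes every compact set exactly when its norm tends to infinity; thus ``$x_i\to\infty$ in $\mathbb{R}^n_{\ge 0}$'' means precisely ``$\|x_i\|\to\infty$''. Given a bounded Borel $f:\mathbb{R}^n_{>0}\to V$ with $\lim_{x\to+\infty}f(x)=l$, I would extend it arbitrarily to a bounded Borel function $\widetilde f:\mathbb{R}^n_{\ge 0}\to V$ and apply Theorem \ref{th:3} to $\widetilde f$ and to the maps $A(u)(x)=A_ux+b(u)$.

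The verifications are all short. (a) Each $A(u)$ sends $\mathbb{R}^n_{\ge 0}$ into $\mathbb{R}^n_{>0}$: for $x\ge 0$ the $j$-th coordinate of $A_ux+b(u)$ is $\sum_k(A_u)_{jk}x_k+b_j(u)\ge b_j(u)>0$, where positivity of $A_u$ and of $b(u)$ is used; in particular $A_ux+b(u)\in\mathbb{R}^n_{>0}$ already for $x\in\mathbb{R}^n_{>0}$, so $(H_{\Phi,A,\mu}\widetilde f)(x)=(H_{\Phi,A,\mu}f)(x)$ and $\widetilde f(x)=f(x)$ for every $x\in\mathbb{R}^n_{>0}$. (b) $A(u)(\infty)=\infty$ for each $u$: if $\|x_i\|\to\infty$ then non-singularity of $A_u$ gives $\|A_ux_i+b(u)\|\ge\|A_u^{-1}\|^{-1}\|x_i\|-\|b(u)\|\to\infty$, while $A_ux_i+b(u)\in\mathbb{R}^n_{\ge 0}$ by (a). (c) For fixed $x$ the map $u\mapsto A_ux+b(u)$ is the composition of the Borel map $u\mapsto(A_u,b(u))$ with the continuous affine evaluation, hence Borel, so $u\mapsto\widetilde f(A_ux+b(u))$ is $\mu$-measurable; $\sigma$-finiteness of $\mu$ keeps the Bochner-integrability and Lebesgue--Vitali steps inherited from the proof of Theorem \ref{th:2} valid. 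Since conditions (i)--(iii) and \eqref{norm20} are among the hypotheses, Theorem \ref{th:3} applies; feeding it a net with $x_i\to+\infty$ (so $x_i\in\mathbb{R}^n_{>0}$, $\|x_i\|\to\infty$, and $I$ with countable cofinal part), for which $\widetilde f(x_i)=f(x_i)\to l$ by (a), yields $(H_{\Phi,A,\mu}f)(x_i)=(H_{\Phi,A,\mu}\widetilde f)(x_i)\to l$, i.e.\ $\lim_{x\to+\infty}(H_{\Phi,A,\mu}f)(x)=l$.

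The converse is the converse half of Theorem \ref{th:3}; directly, substituting $f\equiv l$ with $l\neq 0$ (available because $V$ is nontrivial) turns $(H_{\Phi,A,\mu}f)(x)$ into $\bigl(\int_\Omega\Phi(u,x)\,d\mu(u)\bigr)l$, whose convergence to $l$ along $x\to+\infty$ is equivalent to \eqref{norm20}.

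The only genuinely delicate point I expect is the choice of ambient space together with step (b). One cannot work on $\mathbb{R}^n_{>0}$ with its subspace topology: there a sequence such as $(1/i,1,\dots,1)$ escapes every compact subset of $\mathbb{R}^n_{>0}$, yet its $A(u)$-image converges to an interior point, so $A(u)(\infty)=\infty$ would simply fail. Passing to $\mathbb{R}^n_{\ge 0}$ removes this pathology, and (b) is exactly the place where both hypotheses on $A_u$ are used --- positivity to keep the orbit inside $\mathbb{R}^n_{\ge 0}$ and non-singularity to force $\|A_ux_i+b(u)\|\to\infty$ --- so stating it correctly is the crux of the argument.
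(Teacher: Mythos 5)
Your reduction is in substance the same as the paper's: both arguments feed the affine family $A(u)(x)=A_ux+b(u)$ into the net version, Theorem \ref{th:3}, using positivity of $A_u$ and $b(u)$ to keep the orbit inside the orthant, the bound $|A_ux+b(u)|\ge \|A_u^{-1}\|^{-1}|x|-|b(u)|$ to send images to infinity, Borel measurability of $u\mapsto A_ux+b(u)$ for the measurability hypothesis, and $f\equiv l$ for the converse. Your extra device --- replacing $\mathbb{R}^n_{>0}$ by the closed orthant $\mathbb{R}^n_{\ge 0}$, on which escaping compact sets coincides with $|x|\to\infty$ --- is a genuine gain in precision: the paper works directly on $S=\mathbb{R}^n_{>0}$ and asserts that $x_k\to\infty$ in $S$ implies $A(u)(x_k)\to\infty$ in $S$, which, with the compact-escaping meaning of $\to\infty$ on the open orthant, is exactly the pathology you exhibit with $(1/i,1,\dots,1)$; the intended meaning there is $|x|\to\infty$, and your choice of ambient space makes this honest.

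There is, however, one genuine flaw in the execution: ``extend it arbitrarily'' does not suffice, because you then invoke Theorem \ref{th:3} net by net, concluding $(H_{\Phi,A,\mu}\widetilde f)(x_i)\to l$ from $\widetilde f(x_i)\to l$ along the single net you feed it. Theorem \ref{th:3} cannot be used in that per-net form: its proof (like that of Theorem \ref{th:2}) requires $\widetilde f(A(u)(x_i))\to l$ for each $u$, which does not follow from convergence of $\widetilde f$ along one net, and the per-net statement is false in general (take $S=\mathbb{R}$, $\Omega$ a point with unit mass, $\Phi\equiv 1$, $A(u)(x)=-x$, $f$ bounded with different limits at $+\infty$ and $-\infty$, $x_k=k$: conditions (i)---(iii), \eqref{norm20} and $A(u)(\infty)=\infty$ all hold, yet $(\mathcal{H}_{\Phi,A,\mu}f)(x_k)=f(-k)$ does not converge to $\lim_k f(x_k)$). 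The usable reading of Theorem \ref{th:3} needs $\widetilde f(x_i)\to l$ for every net going to infinity in $S=\mathbb{R}^n_{\ge 0}$, i.e. $\widetilde f(x)\to l$ as $|x|\to\infty$ in the closed orthant, and an arbitrary bounded Borel extension can violate this at boundary points of large norm. The repair is immediate: extend $f$ by $l$ on $\mathbb{R}^n_{\ge 0}\setminus\mathbb{R}^n_{>0}$; this extension is bounded and Borel and does tend to $l$ as $|x|\to\infty$ in $\mathbb{R}^n_{\ge 0}$, since your hypothesis on $f$ covers all interior points of large norm. With that single change your steps (a)--(c) give exactly the hypotheses of Theorem \ref{th:3}, and since the images $A_ux+b(u)$ lie in the open orthant, $H_{\Phi,A,\mu}\widetilde f=H_{\Phi,A,\mu}f$ on $\mathbb{R}^n_{>0}$, so both the direct implication and your converse go through.
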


\begin{proof} In our case $S=\mathbb{R}^n_{>0}$, $A(u)(x)=A_ux+b(u)$, and one can use  sequences instead of nets.  Note that each $A(u)$ maps $\mathbb{R}^n_{>0}$ into itself. Since  $|A_u x|\ge \frac{1}{\|A_u^{-1}\|}|x|$ for all  $x\in \mathbb{R}^n$ (here $\|A_u^{-1}\|$ denotes the operator norm of a matrix, $|x|$ denotes the Euclidean norm in $\mathbb{R}^n$), we have that $x_k\to\infty$ in $S$ implies $A(u)(x_k)\to\infty$ in $S$ ($k\to\infty$). 

 Finally, the map  $u\mapsto f(A(u)(x)+b(u))$ is Borel measurable  for each $x\in  \mathbb{R}^n_{>0}$, since  the map $u\mapsto A_u x$ between  $\Omega$ and $ \mathbb{R}^n_{>0}$ is Borel measurable and $b$ is Borel measurable, too.
\end{proof}

\end{document}